\def\q{\hfill\rule{1ex}{1ex}}
\def\0{\emptyset}
\def\q{\hfill\rule{1ex}{1ex}}
\newtheorem{theorem}{Theorem}
\newtheorem{lemma}[theorem]{Lemma}
\newtheorem{claim}[theorem]{Claim}
\title
  {The existence of Hamilton cycle in $n$-balanced $k$-partite graphs}
\thanks{
Yi Zhang is supported by the National Natural Science Foundation of China (Grant 11901048 \& 12071002).}
\author{Zongyuan Yang}
\address{ School of Science, Beijing University of Posts and Telecommunications, Beijing, 100876, China}
\email{yangzongyuan0@bupt.edu.cn}
\author{Yi Zhang}
\address{ School of Science, Beijing University of Posts and Telecommunications, Beijing, 100876, China}
\email{shouwangmm@sina.com}
\author{Shichang Zhao}
\address{ School of Science, Beijing University of Posts and Telecommunications, Beijing, 100876, China}
\email{1051399257@qq.com}
\date{}
\begin{document}
\title{The existence of Hamilton cycle in $n$-balanced $k$-partite graphs}

\begin{abstract}
 Let $G_{k,n}$ be the $n$-balanced $k$-partite graph, whose vertex set can be partitioned into $k$ parts, each has $n$ vertices. In this paper, we prove that if $k \geq 2,n \geq 1$, for the edge set $E(G)$ of $G_{k,n}$
 $$|E(G)| \geq\left\{\begin{array}{cc}
   1                         & \text { if } k=2, n=1 \\
   n^{2} C_{k}^{2}-(k-1) n+2 & \text { other }
  \end{array}\right.$$
 then $G_{k,n}$ is hamiltonian. And the result may be the best.
\end{abstract}

\maketitle

\section{1 \quad Introduction}

\quad A Hamiltonian circle is a circle that contains all the vertices of the graph. The problem of the existence of Hamiltonian circle has been a highly  significant problem in graph theory, because it's NP-hard. Researchers have tried to give some tight sufficient conditions to prove the existence of Hamiltonian circles, among which the minimum degree condition (Dirac condition), the minimum degree sum condition (Ore condition) and the Fan-type condition are three classical results.

Inspired by the problem of Turán-type extremal graphs, we hope to give a sufficient condition for the existence of Hamiltonian circles in balanced multipartite graphs in terms of the number of edges, i.e., there must be a Hamiltonian circle for as many edges as there are at least in a balanced multipartite graph. Complete balanced multipartite graphs have a lot of great structural properties, such as having Hamiltonian circles, which can be used as a structural basis for interconnection networks. Connections are often broken in networks, so the fault tolerance of edges can be an important indicator of the stability of a network. The paper can provide an important theoretical support for determining the fault tolerance of complete multipartite graphs with Hamiltonian circles from the perspective of the number of edges, making it the basis of network structure.

 Let $G_{k,n}$ be the complete $n$-balanced $k$-partite graph, whose vertex set can be partitioned into $k$ parts, each has $n$ vertices. Our main result is the following theorem.\\
\begin{theorem}\label{theorem1} Let \normalsize $G=(V,E)$ be an $n$-balanced $k$-partite graph with $k \geq 2$,  $n \geq 1$ except $k=2$, $n=1$.  If $|E(G)| \geq n^{2} C_{k}^{2}-(k-1) n+2 $, then $G$ is Hamiltonian.
\end{theorem}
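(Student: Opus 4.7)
The plan is to argue by contradiction. Assume $|E(G)|\geq n^2\binom{k}{2}-(k-1)n+2$ but $G$ is not Hamiltonian, and let $H$ be the complement of $G$ inside the complete balanced $k$-partite graph on the same vertex set, so the hypothesis becomes $|E(H)|\leq(k-1)n-2$. The extremal non-Hamiltonian configuration is the graph obtained from the complete balanced $k$-partite graph by deleting $(k-1)n-1$ edges at a single vertex to leave it a pendant, so my goal is to show that strictly fewer deletions cannot destroy every Hamilton cycle. A cheap first observation is that $\deg_G(v)\geq 2$ for every vertex $v$: indeed $\deg_H(v)\leq|E(H)|\leq(k-1)n-2$, whence $\deg_G(v)\geq(k-1)n-\deg_H(v)\geq 2$.

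The bulk of the work is a longest-cycle/P\'osa rotation argument. Let $C=c_0c_1\dots c_{m-1}$ be a longest cycle in $G$; since $G$ is not Hamiltonian, $m<kn$ and there exists $v\notin V(C)$. Two standard extendability observations apply: (i) if $c_i\in N_G(v)$ then $c_{i+1}\notin N_G(v)$, otherwise one may insert $v$ between $c_i$ and $c_{i+1}$ to enlarge $C$; and (ii) for distinct $c_i,c_j\in N_G(v)$ the pair $c_{i+1}c_{j+1}$ is missing from $G$, otherwise the rotation $c_{i+1}\dots c_j\,v\,c_i\dots c_{j+1}c_{i+1}$ yields a cycle of length $m+1$. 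Writing $d=|N_G(v)\cap V(C)|$ and $S=\{c_{i+1}:c_i\in N_G(v)\}$, the set $S$ has size $d$ and is independent in $G$, and all pairs $\{v,s\}$ with $s\in S$ are likewise non-edges of $G$.

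Every different-part pair inside $S$ and every cross-part pair between $v$ and $S$ is thus forced into $E(H)$. To contradict $|E(H)|\leq(k-1)n-2$, I would lower bound $d$: since the non-$C$-neighbors of $v$ lie in $V\setminus V(C)$, we have $d\geq\deg_G(v)-(kn-1-m)$, and combined with the minimum-degree bound above and with a lower bound on $m$ coming from the edge count of $G$, this should push $d$ to be moderately large, so that a $\binom{d}{2}$-type contribution from the cross-part pairs in $S$ already exceeds $(k-1)n-2$. When $S$ happens to collapse into a single part, the trivial bound $|S|\leq n$ together with $d\leq\lfloor m/2\rfloor$ from (i) forces $m$ to be small, and the remaining small cases can be finished by a direct check. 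When $m$ is close to $kn$ so that $v$ has essentially no neighbors outside $C$, the inequality above is tight and I would amplify the count of forced $H$-edges by chaining rotations over several vertices of $V\setminus V(C)$, or by passing to the Hamilton path version of the same rotation trick. The boundary case $k=2$ is treated separately: the edge-count hypothesis implies $d_G(u)+d_G(v)\geq n+1$ for every non-adjacent cross-part pair, and the Moon--Moser theorem delivers a Hamilton cycle.

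The main obstacle I anticipate is dovetailing the two extreme regimes, particularly making the $\binom{d}{2}$ count tight enough in the presence of the same-part restriction on $E(H)$: a clustering of successors in a single part can wipe out the quadratic gain, and one must exploit the fact that each part contains only $n$ vertices to rule this out. Sharpening the constant to $+2$, which is what distinguishes this result from easier versions and makes the pendant example extremal, will require careful constant-tracking in the final count.
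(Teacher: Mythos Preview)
Your approach is genuinely different from the paper's. The paper does not use a longest-cycle/P\'osa rotation argument at all; instead it invokes a known $\sigma$-type theorem for balanced $k$-partite graphs (Theorem~\ref{theorem3}) and proceeds by induction on $n$: if the degree-sum condition of that theorem fails at a pair $u_1,v$, one finds a transversal path (or two disjoint such paths) through $u_1$, deletes it, applies the inductive hypothesis to the resulting $(n-1)$- or $(n-2)$-balanced graph, and then splices the path(s) back into the smaller Hamilton cycle via a matching/counting argument. The external $\sigma$-theorem does a lot of the heavy lifting.

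Your rotation plan, by contrast, has a real gap at exactly the point you flag. The sentence ``this should push $d$ to be moderately large'' is where the argument breaks. Take the situation $m=kn-1$, so a single vertex $v$ lies off $C$; then $d=\deg_G(v)$, and your own minimum-degree bound gives only $d\ge 2$. With $d=2$ the set $S$ has two elements, and if both successors $c_{i+1},c_{j+1}$ happen to lie in $v$'s part (nothing forbids this), then neither the pair $\{c_{i+1},c_{j+1}\}$ nor the pairs $\{v,c_{i+1}\},\{v,c_{j+1}\}$ are cross-part, so the rotation produces \emph{zero} new edges of $H$ beyond the $(k-1)n-2$ already incident to $v$. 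There is no $\binom{d}{2}$ gain to speak of, and ``chaining rotations over several vertices of $V\setminus V(C)$'' is unavailable because there is only one such vertex. This is not a matter of constant-tracking: the near-extremal instance where all of $E(H)$ is a star at $v$ with $\deg_G(v)=2$ must be handled by a separate structural argument (showing $G-v$ has a Hamilton cycle with the two neighbours of $v$ consecutive, or a Hamilton $a$--$b$ path when $a,b$ share a part), and your outline does not supply one. The paper's inductive framework sidesteps this difficulty entirely by peeling off the bad vertex inside a full transversal path before appealing to the smaller instance.
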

\section{2  \quad Notations and useful results}

\quad All graphs considered in this paper are simple, finite, loop-free and undirected. For a graph $G$, let $V(G),E(G)$ denote the vertex set and edge set of $G$, respectively. For a vertex $u \in V(G)$, let $N_G(u)$ be the set of adjacent vertices to $u$ in $G$ and $d_G(u) =|N_G(u)|$.

Let $G_{n_1,\cdots,n_k}=(V(G_{n_1,\cdots,n_k}),E(G_{n_1,\cdots,n_k}))$ be a $k$-partite graph with order $n_1+n_2+\cdots+n_k$, where the vertex set $V(G_{n_1,\cdots,n_k})$ can be divided into $k$ parts $V_1,V_2,\cdots,V_k$ with $|V_i|=n_i$ for $1 \leq i \leq k$ and the edge set $E(G_{n_1,\cdots,n_k})$ contains edges with no two vertices from one part.   If the edge set $E(G_{n_1,\cdots,n_k})$ contains all edges with no two vertices from one part, then we call $G_{n_1,\cdots,n_k}$ the complete $k$-partite graph with order $n_1+n_2+\cdots+n_k$, and denote the graph by $CG_{n_1,\cdots,n_k}$.  If $n_1=\cdots=n_k$, for simplicity, we write $G_{k,n}$ and $CG_{k,n}$ instead, and call $G_{k,n}$ an $n$-balanced $k$-partite graph. Let $\overline{G_{n_1,\cdots,n_k}}$ be the $k$-partite graph with the vertex set $V(G_{n_1,\cdots,n_k})$ and the edge set
$E(CG_{n_1,\cdots,n_k})\setminus E(G_{n_1,\cdots,n_k})$.  Clearly $|E(G_{n_1,\cdots,n_k})|+|E(\overline{G_{n_1,\cdots,n_k}})| = |E(CG_{n_1,\cdots,n_k})|$ and $|E(G_{k,n})|+|E(\overline{G_{k,n}})| = |E(CG_{k,n})|$.
Given a vertex $u\in V(G_{k,n})$,  clearly $d_{\overline{G_{k,n}}}(u)+d_{G_{k,n}}(u) =(k-1)n$. Let $ \delta(G_{k,n})$ be the smallest degree among all vertices and $ \sigma(G_{k,n})$ be the smallest degree sum of two nonadjacent vertices from different parts.

The following results will be used in our proof.
\begin{theorem}(Ore \cite{1960Note}) Let $G$ be a graph with  $n \geq 3$ vertices. If $d(u)+d(v) \geq n$ for any two nonadjacent vertices $u$ and $v$, then $G$ is hamiltonian.
\end{theorem}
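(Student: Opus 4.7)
The plan is to prove Ore's theorem by contradiction using an extremal counterexample. Suppose the conclusion fails: among all graphs on $n \geq 3$ vertices satisfying the Ore degree-sum hypothesis but containing no Hamilton cycle, let $G$ be one with the maximum number of edges. Since $K_n$ is Hamiltonian, $G$ is not complete, so we may pick non-adjacent vertices $u, v \in V(G)$. Maximality forces $G + uv$ to be Hamiltonian, and every Hamilton cycle of $G + uv$ must traverse the new edge $uv$; removing $uv$ from such a cycle yields a Hamilton path $u = v_1, v_2, \ldots, v_n = v$ in $G$ itself. (It is also worth noting that adding an edge preserves the degree-sum hypothesis, so $G+uv$ really does inherit Ore's condition, though we will only use that $G+uv$ is Hamiltonian.)

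Next, I plan to rotate this path to produce a Hamilton cycle via a pigeonhole argument on index sets. Define
\[
S = \{\, i \in \{1,\ldots,n-1\} : v_i v_n \in E(G)\,\}, \qquad T = \{\, i \in \{1,\ldots,n-1\} : v_1 v_{i+1} \in E(G)\,\}.
\]
Since $u$ and $v$ are non-adjacent, $1 \notin S$ and $n-1 \notin T$, so both $S$ and $T$ sit inside the ambient set $\{1,\ldots,n-1\}$ of size $n-1$. Moreover $|S| = d_G(v)$ and $|T| = d_G(u)$, so the hypothesis gives $|S| + |T| = d_G(u) + d_G(v) \geq n > n-1$. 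By the pigeonhole principle there exists an index $i \in S \cap T$.

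For such an $i$ both $v_i v_n$ and $v_1 v_{i+1}$ are edges of $G$, so the closed walk
\[
v_1, v_2, \ldots, v_i, v_n, v_{n-1}, \ldots, v_{i+1}, v_1
\]
traces a Hamilton cycle of $G$ — it walks up the prefix of the path, crosses via $v_i v_n$, walks back down the reversed suffix, and closes via $v_1 v_{i+1}$. This contradicts the assumption that $G$ is not Hamiltonian, completing the proof.

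The main step to get right is the index bookkeeping: showing that $S$ and $T$ genuinely fit inside a common ambient set of size exactly $n-1$ (which is where the non-adjacency of $u$ and $v$ enters critically, excluding the would-be elements $1 \in S$ and $n-1 \in T$), and verifying that the rotated sequence visits each vertex exactly once. I do not expect any case analysis beyond that — the pigeonhole inequality $n > n-1$ delivers the contradiction in one stroke, so the whole argument is essentially a single rotation trick wrapped around an extremal assumption.
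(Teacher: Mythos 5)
Your proof is correct: the extremal (edge-maximal counterexample) setup, the reduction to a Hamilton $u$--$v$ path, and the rotation/pigeonhole argument with the index sets $S$ and $T$ constitute the classical proof of Ore's theorem, and the index bookkeeping (including the exclusions $1\notin S$, $n-1\notin T$, which guarantee the rotated closed walk is a genuine cycle) is handled properly. Note, however, that the paper gives no proof of this statement at all --- it is quoted as a known result with a citation to Ore (1960) --- so there is no in-paper argument to compare against; your write-up simply supplies the standard proof of the cited theorem.
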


\begin{theorem}\label{theorem2} For any graph $G$ with $n \geq 3$ vertices, if $|E(G)| \geq \binom{n-1}{2} + 2$,
then $G$ is Hamiltonian.
\end{theorem}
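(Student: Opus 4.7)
The plan is to derive this as a short consequence of Ore's theorem by an edge-counting argument.

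First I would reduce to the non-complete case: if $G$ is complete then $G$ is trivially Hamiltonian for $n \ge 3$, so assume $G$ has at least one nonadjacent pair. Argue by contradiction, supposing $G$ is not Hamiltonian. By the contrapositive of Ore's theorem, there must then exist two nonadjacent vertices $u, v \in V(G)$ with $d(u) + d(v) \le n-1$.

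Next I would partition the edges of $G$ into two classes: those incident to $\{u,v\}$, and those lying entirely inside $G - \{u,v\}$. Since $u$ and $v$ are nonadjacent, no edge is counted twice in the first class, so it contains at most $d(u) + d(v) \le n-1$ edges. The second class lies in a graph on $n-2$ vertices, contributing at most $\binom{n-2}{2}$ edges. Hence
\[
|E(G)| \;\le\; (n-1) + \binom{n-2}{2} \;=\; \binom{n-1}{2} + 1,
\]
where the final equality is the Pascal identity $\binom{n-1}{2} = \binom{n-2}{2} + (n-2)$ rearranged (one of the routine checks I would verify but not belabor). This contradicts the hypothesis $|E(G)| \ge \binom{n-1}{2} + 2$, completing the proof.

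There is no real obstacle here, since the crucial input (Ore's theorem) has already been stated; the only thing to be careful about is handling the degenerate case where $G$ is already complete, and making sure the inequality $d(u)+d(v) \le n-1$ from the failure of Ore's condition is not double-counted with a possible edge $uv$ — which is avoided precisely because $u,v$ are chosen to be nonadjacent.
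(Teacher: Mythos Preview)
Your argument is correct and matches the paper's own proof: both find a nonadjacent pair $u,v$ with $d(u)+d(v)\le n-1$, bound $|E(G)|$ by $\binom{n-2}{2}+(n-1)<\binom{n-1}{2}+2$, and then invoke Ore's theorem. The only cosmetic difference is that the paper verifies Ore's condition directly rather than phrasing it as a contrapositive, but the edge-count is identical.
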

{\noindent Proof.} We claim that $d(u)+d(v) \geq n$ for any two nonadjacent vertices $u$ and $v$.  Otherwise, we let $u_0$ and $v_0$ be two nonadjacent vertices with $d(u)+d(v) \leq n-1$. Then we obtain that  $|E(G)| \leq \binom{n-2}{2} + n-1 < \binom{n-1}{2} + 2$, a contradiction.  Furthermore, by Ore Theorem,  $G$ is Hamiltonian.

\begin{lemma}\label{lemma1} Let $G$ be a graph with $n \geq 3$ vertices and $u_1-P-u_n$ be a Hamilton path. If $d(u_1)+d(u_n) \geq n$, then $G$ is Hamiltonian.
\end{lemma}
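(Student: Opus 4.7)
The plan is to run the standard Ore rotation argument on the Hamilton path $P = u_1 u_2 \cdots u_n$. First I would dispose of the easy case: if $u_1 u_n \in E(G)$, then appending this edge to $P$ already gives a Hamilton cycle, so there is nothing to prove. From now on I would assume $u_1 u_n \notin E(G)$.

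The core idea is to locate an index $j$ with $3 \leq j \leq n-1$ such that $u_1 u_j \in E(G)$ and $u_{j-1} u_n \in E(G)$; once such a $j$ exists, the cycle
\[
u_1 \, u_j \, u_{j+1} \, \cdots \, u_n \, u_{j-1} \, u_{j-2} \, \cdots \, u_2 \, u_1
\]
visits every vertex exactly once and is therefore a Hamilton cycle. To produce $j$ I would introduce the two index sets
\[
A = \{\, j : 3 \leq j \leq n-1,\ u_1 u_j \in E(G)\,\}, \qquad B = \{\, j : 3 \leq j \leq n-1,\ u_{j-1} u_n \in E(G)\,\},
\]
both sitting inside $\{3, 4, \ldots, n-1\}$, a set of size $n-3$.

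The next step is a short counting argument. Since $u_1 u_2 \in E(G)$ lies on $P$ and $u_1 u_n \notin E(G)$, every neighbor of $u_1$ other than $u_2$ contributes an element of $A$, giving $|A| = d(u_1)-1$. Symmetrically, using $u_{n-1} u_n \in E(G)$ and $u_1 u_n \notin E(G)$, I get $|B| = d(u_n)-1$. Hence
\[
|A| + |B| = d(u_1) + d(u_n) - 2 \geq n - 2 > n - 3 = |\{3,\ldots,n-1\}|,
\]
so $A \cap B \neq \emptyset$ by pigeonhole. Any $j \in A \cap B$ yields the desired cycle.

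I do not anticipate any real obstacle here; the only thing to be careful about is the bookkeeping at the two boundary indices $j=2$ and $j=n$ (which correspond to the guaranteed path edges $u_1 u_2$ and $u_{n-1} u_n$ and to the excluded chord $u_1 u_n$), because getting the off-by-one right is what makes the degree-sum inequality $d(u_1)+d(u_n) \geq n$ exactly tight enough to force $A \cap B$ to be nonempty.
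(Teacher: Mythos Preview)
Your proof is correct and is essentially the same argument as the paper's: both use the standard crossing-chord observation that if $u_1u_j$ and $u_{j-1}u_n$ are simultaneously edges then the path closes into a Hamilton cycle. The paper phrases this contrapositively (assume $G$ is not Hamiltonian, so each neighbor $u_k$ of $u_1$ forbids $u_{k-1}$ as a neighbor of $u_n$, yielding $d(u_1)+d(u_n)\le n-1$), while you phrase it directly via pigeonhole on the index sets $A$ and $B$; the content is identical.
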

{\noindent Proof.} Let $u_1-u_2-\cdots u_{n-1}-u_n$ be a Hamilton path. To the contrary, we assume that $G$ is not Hamiltonian. If $u_k$ ia adjacent to $u_1$, then $u_{k-1}$ is not adjacent to $u_n$, otherwise we find a Hamilton cycle. Therefore $d(u_n) \leq n-1-d(u_1)$. Similarly,  $d(u_1) \leq n-1-d(u_n)$. It follows that $d(u_1)+d(u_n) \leq  n-1$, a contradiction. \q

\begin{theorem}\label{theorem4} \cite{1995Hamiltonicity} Let $G_{k,n}$ be an $n$-balanced $k$-partite graph with $k \geq 2$. If $$\delta(G_{k,n})>\left\{\begin{array}{ll}
  \left(\frac{k}{2}-\frac{1}{k+1}\right) n & \text { if } k \text { is odd, }  \\
  \left(\frac{k}{2}-\frac{2}{k+2}\right) n & \text { if } k \text { is even, }
 \end{array}\right.$$ then $G_{k,n}$ is Hamiltonian.
\end{theorem}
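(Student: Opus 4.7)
The plan is to assume for contradiction that $G_{k,n}$ satisfies the stated degree lower bound but is not Hamiltonian, and derive a contradiction by first exhibiting a Hamilton path and then closing it into a Hamilton cycle, in the spirit of Lemma \ref{lemma1}. For the first step, I would take a longest path $P = v_1 v_2 \cdots v_m$ in $G_{k,n}$; by maximality all neighbors of $v_1$ and $v_m$ lie on $P$. If $v_1, v_m$ are in different parts, a standard rotation/extension argument shows either $V(P)$ already carries a cycle (in which case any vertex outside $P$ with a neighbor on $P$ extends the path, and the degree condition guarantees such a vertex exists) or one derives a longer path directly, in either case contradicting maximality. If $v_1$ and $v_m$ share a part, a preliminary rotation first moves an endpoint into a different part, which is feasible because the lower bound on $d(v_1)$ forces many neighbors across several parts. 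Hence $m = kn$.

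Next, let $P = v_1 \cdots v_{kn}$ be a Hamilton path with $v_1, v_{kn}$ in different parts. Imitating the argument in the proof of Lemma \ref{lemma1}, for every index $i$ with $v_{i+1} \in N(v_1)$ the edge $v_i v_{kn}$ must be absent, else $v_1 v_{i+1} \cdots v_{kn} v_i v_{i-1} \cdots v_2 v_1$ is a Hamilton cycle. I would then decompose the non-neighbors of $v_{kn}$ on $P$ into \emph{forced} non-edges (vertices lying in the part containing $v_{kn}$) and \emph{genuine} non-edges across parts, and sum the analogous estimate for $v_1$. This should produce an inequality of the form $d(v_1) + d(v_{kn}) \leq (k-1)n - (\text{saving})$, where the saving reflects how the path-indices are distributed across the $k$ parts, and which ultimately contradicts the minimum-degree hypothesis.

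The main obstacle is the precise accounting of this saving term, which must exactly match the parity-dependent correction $n/(k+1)$ for odd $k$ and $2n/(k+2)$ for even $k$. This requires a careful case distinction on how the path-indices distribute among the $k$ parts: the extremal configurations, formed by gluing two complete $k$-partite-like blocks along a common set with part sizes tuned to parity, show that the worst case genuinely behaves differently for $k$ odd versus $k$ even. Translating the crude Ore-type count into this sharp parity-dependent threshold, rather than settling for a weaker $(k-1)n/2$-type bound, is the technically delicate heart of the proof, and it is where the parity split in the theorem statement originates.
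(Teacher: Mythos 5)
This statement is not proved in the paper at all: it is quoted from the cited reference \cite{1995Hamiltonicity} and used as a black box, so there is no in-paper proof to compare yours against. Judged on its own terms, your proposal is an outline with the decisive step missing, and you say so yourself: the ``precise accounting of this saving term'' that turns a crude Ore-type count $d(v_1)+d(v_{kn})\leq (k-1)n$ into the sharp parity-dependent thresholds $\left(\frac{k}{2}-\frac{1}{k+1}\right)n$ and $\left(\frac{k}{2}-\frac{2}{k+2}\right)n$ is precisely the content of the theorem, not a technicality to be deferred. Note that the hypothesis only gives $d(v_1)+d(v_{kn})$ slightly above $\left(k-\frac{2}{k+1}\right)n$, which is well below the $kn$ needed for the Lemma \ref{lemma1} closure and even below the $(k-1)n$ that the naive crossing-pair count delivers once the $n-1$ ``forced'' same-part non-neighbours of each endpoint are discounted. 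So the multipartite refinement you gesture at must recover roughly $\frac{2n}{k+1}$ beyond the trivial saving, and nothing in your sketch indicates how the distribution of path indices among parts yields this; the extremal examples you allude to only show the bound cannot be improved, not that it suffices.

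The first stage has the same problem in miniature. The claim that a longest path can always be extended, or its endpoints rotated into distinct parts, under a minimum degree of only about $\frac{kn}{2}-\frac{n}{k+1}$ is not routine: standard rotation--extension arguments for Hamilton paths typically need degrees around half the order, and here one must again exploit that same-part non-adjacencies are free. The cited proof of this theorem is long and genuinely delicate for exactly these reasons. As it stands your proposal correctly identifies the shape of the argument and where the difficulty lies, but it does not constitute a proof; to complete it you would need to carry out the part-by-part counting of neighbours of the two endpoints along the path (splitting indices $i$ according to the part containing $v_i$ and the part containing $v_{i+1}$) and verify the resulting optimization over part sizes reproduces the $\frac{1}{k+1}$ versus $\frac{2}{k+2}$ dichotomy.
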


\begin{theorem}\label{theorem3} \cite{1997Degree} Let $G_{k,n}$ be an $n$-balanced $k$-partite graph with $k \geq 2$. If $$\sigma(G_{k,n})>\left\{\begin{array}{ll}
  \left(k-\frac{2}{k+1}\right) n & \text { if } k \text { is odd, }  \\
  \left(k-\frac{4}{k+2}\right) n & \text { if } k \text { is even, }
 \end{array}\right.$$ then $G_{k,n}$ is Hamiltonian.
\end{theorem}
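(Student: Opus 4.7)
The plan is to reduce the stated Ore-type hypothesis to the Dirac-type hypothesis of Theorem~\ref{theorem4} via a Bondy--Chv\'atal-style closure operation adapted to balanced multipartite graphs. Define the closure $\mathrm{cl}(G_{k,n})$ by iteratively adjoining any edge $uv$ between two nonadjacent vertices $u,v$ lying in different parts whenever, in the current graph, $d(u)+d(v)$ strictly exceeds the threshold of the hypothesis. The partition is preserved, degrees only increase, and the procedure terminates at some $k$-partite graph $G^{*}$ on the same vertex partition. The key step is a closure lemma: $G_{k,n}$ is Hamiltonian if and only if $G^{*}$ is. Granting this, the hypothesis forces every originally nonadjacent cross-part pair of $G_{k,n}$ to qualify at the very first step, so $G^{*}=CG_{k,n}$, the complete balanced $k$-partite graph, which is Hamiltonian whenever $k\geq 2$ except in the trivial case $(k,n)=(2,1)$; in that case the threshold evaluates to $1$ while no pair of vertices can satisfy $d(u)+d(v)>1$, so the hypothesis is vacuous and nothing needs to be checked.

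To prove the closure lemma, suppose $G+uv$ is Hamiltonian but $G$ is not. Then $G$ contains a Hamilton path $P=w_1w_2\cdots w_m$ with $w_1=u$, $w_m=v$, and $m=kn$. The standard P\'osa closing argument---if $w_i\in N_G(u)$ then $w_{i-1}\notin N_G(v)$, else the cycle $u\,w_i\,w_{i+1}\cdots w_m\,w_{i-1}\cdots w_1$ would be a Hamilton cycle in $G$---yields only the naive bound $d_G(u)+d_G(v)\leq kn-1$, which falls short of what the threshold demands. The gap must be closed by exploiting the multipartite structure: positions along $P$ occupied by vertices of $V_a$ (the part of $u$) or $V_b$ (the part of $v$) are automatic non-neighbors of $u$ or $v$ respectively, contributing ``free'' exclusions that tighten the count. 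If this refined count is still insufficient, iterate via P\'osa rotations: for each $i$ with $uw_i\in E(G)$, the reversed path $w_{i-1}w_{i-2}\cdots w_1\,w_i\,w_{i+1}\cdots w_m$ is a Hamilton path with new endpoint $w_{i-1}$ and the same opposite endpoint $v$. Applying the $\sigma$-hypothesis to each such new pair produces many structural constraints, and a pigeonhole over the $k$ parts, comparing rotated endpoints across parts, should force the refined inequality matching the threshold.

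The main obstacle is this partite-aware refinement to the sharp threshold. The split of the constant into $2/(k+1)$ for odd $k$ and $4/(k+2)$ for even $k$ signals that extremal non-Hamiltonian configurations near the bound differ by parity and likely require separate extremal analyses: identifying those configurations---plausibly variants of a near-complete multipartite graph missing an appropriately balanced substructure---and ruling them out via the rotation argument is the technical crux. Once that bookkeeping is done, the closure lemma falls out, and the theorem follows immediately from the closure being $CG_{k,n}$ together with Theorem~\ref{theorem4} applied as a safety net to any intermediate regime.
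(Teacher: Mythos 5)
This statement is quoted from the literature (reference \cite{1997Degree}); the paper offers no proof of it, so there is no internal argument to compare against. Judged on its own terms, your proposal has a genuine gap: the entire content of the theorem is concentrated in the ``closure lemma'' you postulate, and you do not prove it --- you explicitly defer it as ``the technical crux'' that ``should force the refined inequality'' once ``that bookkeeping is done.'' The standard P\'osa crossing argument only certifies a closure at the Bondy--Chv\'atal threshold $d(u)+d(v)\geq kn$: if $G+uv$ is Hamiltonian and $G$ is not, one gets $d_G(u)+d_G(v)\leq kn-1$ and nothing more. Your proposed partite-aware refinement --- counting the vertices of $V_b$ as free non-neighbors of $v$ --- fails in the worst case, because those $n-1$ vertices may coincide exactly with the predecessors $w_{i-1}$ of the neighbors of $u$ along the Hamilton path, in which case the two exclusion sets overlap completely and you recover only $kn-1$. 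Closing the remaining gap of roughly $\frac{2n}{k+1}$ (odd $k$) or $\frac{4n}{k+2}$ (even $k$) is precisely where the difficulty of the cited theorem lives, and the rotation-plus-pigeonhole sketch is not an argument: you would need to show that every rotated endpoint pair also violates Hamiltonicity in a way that accumulates into the sharp bound, and to rule out the parity-dependent extremal configurations, none of which is carried out.

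There is also a more basic concern: it is not established (and is far from obvious) that a closure operation at this sub-$kn$ threshold preserves non-Hamiltonicity at all. The theorem is sharp, so there exist non-Hamiltonian balanced $k$-partite graphs with $\sigma$ exactly at the threshold; any such graph containing a nonadjacent cross-part pair of strictly larger degree sum whose addition creates a Hamilton cycle would refute your closure lemma outright. Without either a proof of the lemma or a verification that no such configuration exists, the reduction to Theorem~\ref{theorem4} does not go through, and invoking Theorem~\ref{theorem4} ``as a safety net to any intermediate regime'' is not a substitute, since the Dirac-type bound there is a per-vertex condition that an Ore-type hypothesis does not supply.
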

\section{ Proof of Theorem \ref{theorem1}}

Suppose that  $G_{k,n}=(V(G_{k,n}),E(G_{k,n}))$ is an $n$-balanced $k$-partite graph with $k$ parts  $V_1$, $V_2$, $\cdots,$ $V_k$ and  $|E(G_{k,n})| \geq \binom{k}{2}n^{2}-(k-1) n+2 $, where $k \geq 2$,  $n \geq 1$ except $k=2$, $n=1$. Recall that $E(\overline{G_{k,n}})=E(CG_{k,n})\setminus E(G_{k,n})$,  we have
\begin{equation}\label{eq111}
 |E(\overline{G_{k,n}})| \leq(k-1) n-2.
\end{equation}
If $n=1$, then $|E(G_{k,1})| \geq \binom{k-1}{2}+2$, by Theorem \ref{theorem2}, the result holds.

\begin{claim} If $k=2$, the result holds.
\end{claim}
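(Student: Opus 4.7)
The plan is to reduce the $k=2$ case to the bipartite degree-sum criterion of Theorem~\ref{theorem3}. Specializing that theorem to the even value $k=2$ gives the implication: if $\sigma(G_{2,n}) > n$, then $G_{2,n}$ is Hamiltonian. So the whole claim collapses to showing that, under the hypothesis $|E(G_{2,n})| \geq n^{2}-n+2$ (with $n \geq 2$, since the pair $(k,n)=(2,1)$ is excluded), every pair of nonadjacent vertices taken from different parts has degree-sum at least $n+1$.

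First I would dispose of the degenerate instance $n=2$: the hypothesis forces $|E(G_{2,2})| \geq 4$, so $G_{2,2}=CG_{2,2}=K_{2,2}$, which is the $4$-cycle and already Hamiltonian. Thus I may assume $n \geq 3$, and by \eqref{eq111} this gives $|E(\overline{G_{2,n}})| \leq n-2$.

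For the main regime I would argue by contradiction. Suppose some nonadjacent pair $u \in V_1$, $v \in V_2$ satisfies $d_{G_{2,n}}(u)+d_{G_{2,n}}(v) \leq n$. Using the identity $d_{\overline{G_{2,n}}}(w) = n - d_{G_{2,n}}(w)$, valid for every vertex $w$ in a balanced bipartite graph with parts of size $n$, this rearranges to $d_{\overline{G_{2,n}}}(u)+d_{\overline{G_{2,n}}}(v) \geq n$. The edge $uv$ lies in $\overline{G_{2,n}}$ and is counted twice in that sum, so the number of distinct edges of $\overline{G_{2,n}}$ meeting $\{u,v\}$ is at least $n-1$, contradicting $|E(\overline{G_{2,n}})| \leq n-2$. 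Therefore $\sigma(G_{2,n}) > n$, and Theorem~\ref{theorem3} produces a Hamilton cycle.

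The content of the argument lies entirely in this one complementary-counting step. The only subtlety I expect is keeping track of the edge $uv$ that is double-counted in the degree sum, and noticing that it is precisely this ``$-1$'' which turns the apparent need for $n$ non-edges into $n-1$ and thereby bites against the tight bound $|E(\overline{G_{2,n}})| \leq n-2$. Since the reduction to Theorem~\ref{theorem3} is a clean black-box citation, no further obstacle is anticipated.
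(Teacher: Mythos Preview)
Your proposal is correct and follows essentially the same route as the paper: both reduce to Theorem~\ref{theorem3} at $k=2$ by showing $\sigma(G_{2,n})\geq n+1$ via the edge-count hypothesis, with the paper phrasing the counting directly in $G$ (if $d(u)+d(v)\leq n$ then $|E(G)|\leq (n-1)^2+n$) while you phrase the equivalent step in the complement. Your separate treatment of $n=2$ is harmless but unnecessary, since the complementary-counting argument already covers it.
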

\begin{proof} In this case, we have $|E(G_{2,n})| \geq n^2-n+2$ as $k=2$, which implies that
\begin{equation}\label{eq222}
\sigma {G_{2,n}} \geq n+1 \text{\, and\, } \delta (G_{2,n}) \geq 2.
\end{equation}
Otherwise $|E(G_{2,n})| \leq  n^2-n+1$, a contradiction.
By Theorem \ref{theorem3} with $k=2$, the result holds.
\end{proof}
\begin{claim}\label{claim88} If $n=2$, the result holds.
\end{claim}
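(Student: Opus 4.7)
My plan is to apply Ore's theorem via a degree-sum argument, and to dispatch one extremal configuration by direct construction. Since $k = 2$ was handled by the previous claim, assume $k \ge 3$; the hypothesis gives $|E(\overline{G})| \le 2(k - 1) - 2 = 2k - 4$.

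First I would bound degree sums of nonadjacent pairs. If $u, v$ lie in the same part, the non-edges at $u$ and at $v$ are disjoint edges of $\overline{G}$, so $d_{\overline{G}}(u) + d_{\overline{G}}(v) \le 2k - 4$ and $d(u) + d(v) \ge 2k$. If $u, v$ lie in different parts and $uv \notin E(G)$, the edge $uv$ contributes $2$ to $d_{\overline{G}}(u) + d_{\overline{G}}(v)$ while every other edge of $\overline{G}$ contributes at most $1$; hence $d_{\overline{G}}(u) + d_{\overline{G}}(v) \le |E(\overline{G})| + 1 \le 2k - 3$ and $d(u) + d(v) \ge 2k - 1$. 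If this last bound is strict for every such pair, then every nonadjacent pair of $G$ has degree sum at least $2k = |V(G)|$, and Ore's theorem produces a Hamilton cycle.

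The main obstacle is the extremal case, in which some nonadjacent pair $(u, v)$ with $u \in V_1$, $v \in V_2$ satisfies $d(u) + d(v) = 2k - 1$. Tightness then forces $|E(\overline{G})| = 2k - 4$ and every edge of $\overline{G}$ to be incident to $u$ or $v$, with $uv$ among them; consequently $G - \{u, v\}$ is the complete multipartite graph $K_{1, 1, 2, \ldots, 2}$ on $2k - 2$ vertices. Writing $a = d_{\overline{G}}(u)$, $b = d_{\overline{G}}(v)$ with $a + b = 2k - 3$ and WLOG $a \ge b$, one gets $b \le k - 2$ and $|N_G(v)| = 2(k - 1) - b \ge k$. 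Since $|N_G(u)| + |N_G(v)| = 2k - 1 > 2k - 2$, the intersection $N_G(u) \cap N_G(v)$ is nonempty; fix $p$ in it, then choose $q \in N_G(u) \setminus \{p\}$ and $s \in N_G(v) \setminus \{p, q\}$ (both nonempty by $|N_G(u)| \ge 2$ and $|N_G(v)| \ge k \ge 3$). The graph $G - \{u, v, p\}$ is a complete multipartite graph on $2k - 3$ vertices whose parts have size at most $2$ (it is $K_3$ when $k = 3$), which is Hamilton-connected because its largest part is strictly less than half the vertex count. Taking a Hamilton path from $q$ to $s$ in this graph and appending the segment $s - v - p - u - q$ closes into a Hamilton cycle in $G$.
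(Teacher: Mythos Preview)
Your proof is correct, and it takes a genuinely different route from the paper's.

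The paper invokes the multipartite degree--sum theorem (Theorem~\ref{theorem3}) to reduce to the single case $\sigma(G_{k,2})=2k-1$; it then treats $k=2,3,4$ by hand and proves $k\ge 5$ by induction on $k$, deleting one part, finding a Hamilton cycle in the resulting $2$-balanced $(k-1)$-partite graph, and splicing back the two deleted vertices via Lemma~\ref{lemma1}. You instead treat $G_{k,2}$ as an ordinary graph and apply Ore's theorem directly: you check the degree--sum bound for \emph{all} nonadjacent pairs (same-part and cross-part) from the complement edge count, and in the one extremal configuration you exploit that $G-\{u,v\}$ is complete multipartite and finish with a Hamilton-connectivity fact for $G-\{u,v,p\}$. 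Your argument is shorter and entirely self-contained (it needs only the classical Ore theorem and the standard fact that a complete multipartite graph on $m$ vertices is Hamilton-connected when its largest class has size below $m/2$), whereas the paper's argument leans on the cited balanced-multipartite result and an induction that parallels the main induction on $n$.

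One small remark: the Hamilton-connectivity claim you use is true, but since the paper does not state it, in a write-up you might justify it for $k\ge 4$ via the Ore-type criterion $\sigma_2\ge m+1$ (here $2(m-2)\ge m+1$ since $m=2k-3\ge 5$), with $k=3$ giving $K_3$ directly.
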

\begin{proof} In this case we have $|E(G_{k,2})| \geq 2k^2-4k+4$ as $n=2$, which implies that
 \begin{equation}\label{eq2}
 \sigma (G_{k,2}) \geq 2k-1 \text{\, and\, } \delta (G_{k,2}) \geq 2.
\end{equation}
Otherwise $|E(G_{k,2})| \leq  2k^2-4k+3$, a contradiction.

Theorem \ref{theorem3} with $n=2$ implies that if
\begin{equation}\label{eq1}
 \sigma (G_{k,2}) >\left\{\begin{array}{ll}
   2k-\frac{4}{k+1} & \text { when } k \text { is odd, }  \\
   2k-\frac{8}{k+2}  & \text { when } k \text { is even, }
 \end{array}\right.
\end{equation}
then $G_{k,2}$ is Hamiltonian.
It follows directly that the result holds for $k=2,4$.

If $\sigma (G_{k,2}) \geq 2k$, then  $ G_{k,2}$ is Hamiltonian as (\ref{eq1}). By (\ref{eq2}),  we just need to consider the case when $ \sigma (G_{k,2}) =2k-1$.  Without loss of generality, we let  $ u_1 \in V_1$ and $ u_2 \in V_{2}$ be two nonadjacent vertices  such that $ d(u_1)+d(u_2)=2k-1$ and $d(u_2) > d(u_1)$. It follows that $d(u_2) \geq k$. Since $|E(G_{k,2})| \geq 2k^2-4k+4$, we obtain that $G_{k,2}-\{u_1,u_2\}$ is the complete $k$-partite  graph with order $1+1+2+\cdots+2=2k-2$. Otherwise $|E(G_{k,2})| \leq 2k^2-4k+3$, a contradiction. Suppose $k=3$. We obtain that $d(u_1)=2$, $d(u_2)=3$.  Combining with  $|E(G_{3,2})| \geq  10$, it is easy to find a Hamilton cycle in $G_{3,2}$.

We prove the case when $k \geq 5$ by induction on $k$.  It is easy to obtain that  $G_{k,2}[V_2\cup V_3\cup\cdots V_k]$ is a 2-balanced $k-1$ partite graph with at least $\binom{k-2}{2}2^2+2(k-2)+k-1  > 2(k-1)^2-4(k-1)+4 $ as $k \geq 5$. By inductive hypothesis, $G_{k,2}[V_2\cup V_3\cup\cdots V_k]$ contains a Hamilton cycle $C$. Let $u_1'$ be the other vertex of $V_1$ different from $u_1$.   By (\ref{eq2}),  $\delta (G_{k,2}) \geq 2$,  we can let $u, v \in V(C)$ be two vertices adjacent to $u_1$.  If $\{u,v\}\in E(C)$, then we can construct a cycle of size $2k-1$ in $G_{k,2}-u_1'$. Since $d_{G_{k,2}}(u_1') \geq 2$, it is easy to find a Hamilton path in $G_{k,2}$ with one end being $u_1'$ and the other not being $u_1$, denoted by $w$. Clearly $d(u_1')+d(w) \geq 2(k-2)+1+k > 2k$ as $k \geq 5$. By Lemma \ref{lemma1}, $G_{k,2}$ has a Hamilton cycle. If $\{u,v\}\notin E(C)$, suppose that $C=u-u'-C_1-v-v'-C_2-u$, where $u'-C_1-v$ is the path from $u'$ to $v$ on $C$ not passing $u$ and  $v'-C_2-u$ is the path from $v'$ to $u$ on $C$ not passing $v$,   then at least one of $u'$ and $v'$ is adjacent to $u_1'$ as $d(u_1') \geq 2(k-2)+1$, say $\{u_1',v'\}\in E(G_{k,2})$. Then $u_1'-v'-C_2-u-u_1-v-C_1-u'$ is a Hamilton path in $G_{k,n}$.  Clearly $d(u_1')+d(u') \geq 2(k-2)+1+k > 2k$ as $k \geq 5$. By Lemma \ref{lemma1}, $G_{k,2}$ has a Hamilton cycle.
\end{proof}

Next we prove the case when $k \geq 3,n \geq 3$ by induction on $n$.  We assume $$\sigma(G_{k,n})\leq\left\{\begin{array}{ll}
  \left(k-\frac{2}{k+1}\right) n & \text { if } k \text { is odd, }  \\
  \left(k-\frac{4}{k+2}\right) n & \text { if } k \text { is even. }
 \end{array}\right.$$ Otherwise, by Theorem \ref{theorem3}, $G_{k,n}$ is Hamiltonian. Without loss of generality, we let $u_1 \in V_1$ and $v \in V_2$ be two nonadjacent vertices such that

 \begin{equation}\label{eq3}
 d_{G_{k,n}}(u_1)+d_{G_{k,n}}(v) \leq\left\{\begin{array}{ll}
  \left(k-\frac{2}{k+1}\right) n & \text { if } k \text { is odd, }  \\
  \left(k-\frac{4}{k+2}\right) n & \text { if } k \text { is even, }
 \end{array}\right.
\end{equation}
 and
  \begin{equation}\label{eq4}
 d_{G_{k,n}}(u_1) \leq\left\{\begin{array}{ll}
  \left(\frac{k}{2}-\frac{1}{k+1}\right) n & \text { if } k \text { is odd, }  \\
  \left(\frac{k}{2}-\frac{2}{k+2}\right) n & \text { if } k \text { is even. }
 \end{array}\right.
\end{equation}
Furthermore, since $d_{\overline{G_{k,n}}}(u)+d_{G_{k,n}}(u) =(k-1)n$ for any vertex $u\in V(G_{k,n})$,  we obtain:
 \begin{equation}\label{eq55}
 d_{\overline{G_{k,n}}}(u_1)+d_{\overline{G_{k,n}}}(v) \geq\left\{\begin{array}{ll}
  \left(k+\frac{2}{k+1}-2\right) n & \text { if } k \text { is odd, }  \\
  \left(k+\frac{4}{k+2}-2\right) n & \text { if } k \text { is even, }
 \end{array}\right.
\end{equation}
 and
  \begin{equation}\label{eq66}
 d_{\overline{G_{k,n}}}(u_1) \geq\left\{\begin{array}{ll}
  \left(\frac{k}{2}+\frac{1}{k+1}-1\right) n & \text { if } k \text { is odd, }  \\
  \left(\frac{k}{2}+\frac{2}{k+2}-1\right) n & \text { if } k \text { is even. }
 \end{array}\right.
\end{equation}

Clearly $\delta(G_{k,n}) \geq 2$ as $|E(G_{k,n}) | \geq \binom{k}{2}n^{2}-(k-1) n+2 $. Therefore $d_{G_{k,n}}(u_1) \geq2$.  We distinguish the following two cases:

{\noindent \bf Case 1.} There exist $i \neq j \in \{2,3,\cdots,k\}$ such that $N_{G_{k,n}}(u_1) \cap V_{i} \neq \emptyset$ and $N_{G_{k,n}}(u_1) \cap V_{j} \neq \emptyset$.

Let $G=G_{k,n}-\{u_1,v\}$. We claim that
\begin{align}  \label{eq5}
\delta(G) \geq (k-2)n.
\end{align}
Otherwise, there exists a vertex $u \in V(G)$ such that $d_G(u) \leq (k-2)n-1$. If $u\in V_1$ or $V_2$, then
\begin{align*}
d_{\overline{G}}(u)  \geq  (k-2)n+(n-1)-((k-2)n-1)=n.
\end{align*}
If $u\in V_i$, $3 \leq i \leq k$, then
\begin{align*}
d_{\overline{G}}(u)  \geq  (k-3)n+2(n-1)-((k-2)n-1)=n-1.
\end{align*}
Combining with (\ref{eq55}), we obtain that
 \begin{equation*}
 |E(\overline{G_{k,n}})|\geq n-1+ \left\{\begin{array}{ll}
 \left(k+\frac{2}{k+1}-2\right) n-1 & \text { if } k \text { is odd, }  \\
  \left(k+\frac{4}{k+2}-2\right) n-1 & \text { if } k \text { is even. }
 \end{array}\right.
\end{equation*}
If $k$ is odd, then
 \begin{align*}
 &n-1+\left(k+\frac{2}{k+1}-2\right) n-1 = (k-1)n-2+ \frac{2}{k+1}n > (k-1)n-2;
\end{align*}
if $k$ is even, then
\begin{align*}
 &n-1+\left(k+\frac{4}{k+2}-2\right) n-1 = (k-1)n-2+ \frac{4}{k+2}n > (k-1)n-2.
\end{align*}
It is a contradiction.

If $N_{G_{k,n}}(u_1) \cap V_2 \neq \emptyset$, without loss of generality, we let $u_2 \in V_2$ and $u_3\in V_3$ be two adjacent vertices to $u_1$.  By \ref{eq5}, we can greedily find a path $P=u_2-u_1-u_3-\cdots-u_k$ of length $k$, where $u_i\in V_i$ for $i=2,\cdots,k$. Otherwise there exists $3 \leq i \leq k-1$ such that $N_G(u_i) \cap V_{i+1}=\emptyset$. Then  $d_G(u_i) \leq 2(n-1)+(k-4)n=(k-2)n-2$, a contradiction. If $N_{G_{k,n}}(u_1) \cap V_2 = \emptyset$, without loss of generality, we let $u_3 \in V_3$ and $u_4\in V_4$ be two adjacent vertices to $u_1$. Similarly, by \ref{eq5}, we can greedily find a path $P=u_3-u_1-u_4-\cdots-u_k-u_2$, where $u_i\in V_i$ for $i=2,\cdots,k$ and $u_2 \neq v$. Otherwise there exists $4 \leq i \leq k-1$ such that $N_G(u_i) \cap V_{i+1}=\emptyset$ or $N_G(u_k) \cap V_{2}\setminus\{v\}=\emptyset$. Then  $d_G(u_i) \leq  (k-2)n-1$, a contradiction. Clearly the path $P$ contains one and only one vertex from every part.

Suppose $P=u_2-u_1-u_3-u_4-\cdots-u_k$ (The case when $P=u_3-u_1-u_4-\cdots-u_k-u_2$ is similar).  By  (\ref{eq111}) and (\ref{eq66}), we obtain that $\overline{G_{k,n}-V(P)}$ is a $(n-1)-$balanced $k$-partite graph with at most $\left(\frac{k}{2}-\frac{1}{k+1}\right) n-2$ edges if $k$ is odd and at most $\left(\frac{k}{2}-\frac{2}{k+2}\right) n-2$ edges if $k$ is even. It is not difficult to check that $$\max\Big\{\left(\frac{k}{2}-\frac{1}{k+1}\right) n-2, \left(\frac{k}{2}-\frac{2}{k+2}\right) n-2 \Big\} \leq (k-1)(n-1)-2$$
as  $k \geq 3$, $n \geq 3$. It follows that $G_{k,n}-V(P)$ contains at least $\binom{k}{2}(n-1)^{2}-(k-1)(n-1)+2$ edges. By inductive hypothesis, $G_{k,n}-V(P)$ contains a Hamilton cycle, denoted by $$C = v_1-v_2-\cdots-v_{k(n-1)}-v_1.$$
If $k(n-1)$ is odd, we construct a matching $M$ of size $\frac{k(n-1)-1}{2}$ from the edges of $C$ with $v \notin V(M)$:  $$M=\Big\{\{v_{2i-1},v_{2i}\}: i=1,2,\cdots,\frac{k(n-1)-1}{2}\Big\}.$$
If $k(n-1)$ is even,  we construct a matching $M$ of size $\frac{k(n-1)-2}{2}$ from the edges of $C$ with $v \notin V(M)$:  $$M=\Big\{\{v_{2i-1},v_{2i}\}: i=1,2,\cdots,\frac{k(n-1)-2}{2}\Big\}.$$
We have the following claim.
\begin{claim}\label{claim777}
There exists one edge $\{v_{2i-1},v_{2i}\}$ of $M$ such that $\{u_2,v_{2i-1}\} \in E(G_{k,n})$ and $\{u_k,v_{2i}\}\in E(G_{k,n})$ or $\{u_2,v_{2i}\} \in E(G_{k,n})$ and $\{u_k,v_{2i-1}\}\in E(G_{k,n})$.
\end{claim}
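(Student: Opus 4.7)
The plan is to prove Claim \ref{claim777} by contradiction: assume every matching edge $\{v_{2i-1}, v_{2i}\} \in M$ fails the asserted property. For a fixed such edge, failure means the $2 \times 2$ bipartite ``connection graph'' between $\{u_2, u_k\}$ and $\{v_{2i-1}, v_{2i}\}$, with edges inherited from $G_{k,n}$, admits no perfect matching; standard inspection shows this forces at least two of the four potential entries to be missing, one from each anti-diagonal. I call each such missing pair an \emph{obstruction}, and split obstructions into two categories: \emph{same-part obstructions}, in which one of $v_{2i-1}, v_{2i}$ already lies in $V_2$ or $V_k$ so the edge to $u_2$ or $u_k$ is not even a potential edge of the multipartite graph, and \emph{non-edge obstructions}, which are genuine edges of $\overline{G_{k,n}}$ incident to $\{u_2, u_k\}$.

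Summing over the $|M|$ supposedly bad edges yields at least $2|M| \geq k(n-1) - 2$ obstructions in total. Same-part obstructions are capped by $|V(M) \cap V_2| + |V(M) \cap V_k| \leq 2(n-1)$, and non-edge obstructions are capped by $d_{\overline{G_{k,n}}}(u_2) + d_{\overline{G_{k,n}}}(u_k)$. To squeeze the latter quantity, I would combine three inputs in tandem: the minimum-degree estimate $\delta(G) \geq (k-2)n$ from (\ref{eq5}), which after accounting for the removed vertices $u_1, v$ forces $d_{\overline{G_{k,n}}}(u_2) \leq n-1$ and $d_{\overline{G_{k,n}}}(u_k) \leq n$; the lower bound on $d_{\overline{G_{k,n}}}(u_1) + d_{\overline{G_{k,n}}}(v)$ supplied by (\ref{eq55}), which forces most of the $\leq (k-1)n - 2$ allotted non-edges to concentrate at $\{u_1, v\}$ rather than at $\{u_2, u_k\}$; and the global deficit (\ref{eq111}).

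Putting these estimates together, the contradiction schematically takes the form $k(n-1) - 2 \leq 2(n-1) + (\text{small function of } k,n)$, which fails once $kn$ is large enough. The delicate step, and the main obstacle, is the small-parameter regime (roughly $k \in \{3,4\}$ or $k=5$ with $n$ close to $3$), where the numerical slack shrinks or disappears. To handle that range I would exploit the extra structural fact that consecutive vertices on the Hamilton cycle $C$ lie in distinct parts; consequently, no edge of $M$ can have both endpoints in $V_2$ or both in $V_k$, which eliminates the cheapest way for a bad edge to collect its two obstructions entirely from the same-part category. A careful bookkeeping that incorporates this restriction on the matching should close the remaining gap and complete the contradiction.
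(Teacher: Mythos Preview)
Your approach is essentially the paper's: count at least two ``obstructions'' per bad matching edge, subtract the same-part contributions (at most about $2(n-1)$), and deduce many edges of $\overline{G_{k,n}}$ incident to $\{u_2,u_k\}$ that go into $V(M)$. The paper then finishes in one stroke by noting that these edges are \emph{disjoint} from the non-edges incident to $\{u_1,v\}$ (since $u_1,v\notin V(M)$ and $u_2,u_k\neq u_1,v$), so
\[
|E(\overline{G_{k,n}})|\;\ge\;\bigl(2|M|-2(n-1)\bigr)+\bigl(d_{\overline{G_{k,n}}}(u_1)+d_{\overline{G_{k,n}}}(v)-1\bigr),
\]
and (\ref{eq55}) together with (\ref{eq111}) give the contradiction directly. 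The detour through (\ref{eq5}) to bound $d_{\overline{G_{k,n}}}(u_2)$ and $d_{\overline{G_{k,n}}}(u_k)$ individually is unnecessary.

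Your worry about the small-parameter regime is not misplaced, but it dissolves more cheaply than you suggest. Since $v\in V_2$ and $v\notin V(M)$, one actually has $|V(M)\cap V_2|\le n-2$, so the same-part cap sharpens to $(n-2)+(n-1)=2n-3$. With this, the lower bound on non-edge obstructions becomes $kn-k-2n+1$ in \emph{both} parities of $k(n-1)$, and the paper's displayed computation then yields a contradiction uniformly for all $k\ge 3$, $n\ge 3$; no separate treatment of $k\in\{3,4,5\}$ is needed. Your structural observation---that no edge of $M$ can have both endpoints in $V_2$ or both in $V_k$, hence every bad edge forces at least one genuine non-edge---is correct and would also close the gap, but it is more than what is required.
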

\begin{proof}  To the contrary, if $k(n-1)$ is odd,  the number of vertices in $\overline{G_{k,n}-V(P)-v}$, which is adjacent to $u_2$ or $u_k$,  is at least  $$ 2\frac{k(n-1)-1}{2} - 2(n-1)=kn-k-2n+1,$$
as the number of vertices from the same part to $u_2$ ($u_k$) is at most $(n-1)$ in $\overline{G_{k,n}-V(P)-v}$. By (\ref{eq55}), we obtain:
\begin{equation*} |E(\overline{G_{k,n}})|  \geq kn-k-2n+1+ \left\{\begin{array}{ll}
\left(k+\frac{2}{k+1}-2\right) n-1 & \text { if } k \text { is odd, }  \\
\left(k+\frac{4}{k+2}-2\right) n-1 & \text { if } k \text { is even. }
 \end{array}\right\}
\end{equation*}
If $k$ is odd, then
\begin{align*}
 & kn-k-2n+1+ \left(k+\frac{2}{k+1}-2\right)n-1 = (k-1)n-2+ \left(k-3+\frac{2}{k+1}\right) n-k+2 \\
\geq & (k-1)n-2+ 3\left(k-3+\frac{2}{k+1}\right)-k+2=(k-1)n-2+ 2k+\frac{6}{k+1} -7>  (k-1)n-2.
\end{align*}
We derive that $|E(\overline{G_{k,n}})| > (k-1)n+2$. It is similar when $k$ is even. It is a contradiction.
\end{proof}

By Claim \ref{claim777}, we obtain a Hamilton cycle of $G_{k,n}$ from $P$ and $C$. Without loss of generality, we can assume $\{v_{1},v_{2}\}$ of $M$ satisfying: $\{u_2,v_1\},\{u_k,v_{2}\}\ \in E(G_{k,n})$. Obviously $v_1-u_2-P-u_k-v_2-C-v_1$ is a Hamilton cycle of $G_{k,n}$.

{\noindent \bf Case 2.}  There exists only one $i \in \{2,3,\cdots,k\}$ such that $N_{G_{k,n}}(u_1) \cap V_{i} \neq \emptyset$.

In this case we have $d_{G_{k,n}}(u_1) \leq n$.  If $k=3$ and $n=3$, then it is easy to check that $G_{3,3}$ contains a Hamilton cycle.  Suppose that $k\geq3$, $n\geq3$ except $k=3$ and $n=3$.  We have the following claim.
\begin{claim}\label{claim8}
 $\delta(G_{k,n}-u_1) \geq (k-2)n+1$.
\end{claim}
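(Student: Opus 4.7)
The plan is to argue by contradiction in the style of the proof of (\ref{eq5}): suppose some vertex $u \in V(G_{k,n}) \setminus \{u_1\}$ satisfies $d_{G_{k,n}-u_1}(u) \leq (k-2)n$, and then count non-edges at $u_1$ and at $u$ to force $|E(\overline{G_{k,n}})| \geq (k-1)n - 1$, contradicting (\ref{eq111}). The crucial new input is that the Case 2 hypothesis (all neighbors of $u_1$ lie in a single part $V_i$) gives the strong bound $d_{G_{k,n}}(u_1) \leq n$, and hence $d_{\overline{G_{k,n}}}(u_1) \geq (k-2)n$; this is considerably better than what (\ref{eq66}) provides, and it is exactly the extra slack needed to upgrade the bound $\delta \geq (k-2)n$ of (\ref{eq5}) to $\delta \geq (k-2)n+1$.

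Next I would split into subcases based on the location of $u$ and whether $\{u_1,u\}$ is an edge of $G_{k,n}$, in order to estimate $d_{\overline{G_{k,n}}}(u)$ and correctly handle possible double counting of $\{u_1,u\}$. If $u \in V_1$, then $d_{G_{k,n}}(u) = d_{G_{k,n}-u_1}(u) \leq (k-2)n$, so $d_{\overline{G_{k,n}}}(u) \geq n$, and the pair $\{u_1,u\}$ belongs to no edge of $\overline{G_{k,n}}$ (same part). If $u \notin V_1$ and $\{u_1,u\} \in E(G_{k,n})$, then $d_{G_{k,n}}(u) \leq (k-2)n+1$, so $d_{\overline{G_{k,n}}}(u) \geq n-1$, and again $\{u_1,u\} \notin E(\overline{G_{k,n}})$. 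If $u \notin V_1$ and $\{u_1,u\} \notin E(G_{k,n})$, then $d_{G_{k,n}}(u) \leq (k-2)n$, so $d_{\overline{G_{k,n}}}(u) \geq n$, but now $\{u_1,u\} \in E(\overline{G_{k,n}})$ is common to both non-neighborhoods.

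In each subcase I would conclude by the elementary inequality
$$|E(\overline{G_{k,n}})| \;\geq\; d_{\overline{G_{k,n}}}(u_1) + d_{\overline{G_{k,n}}}(u) - \epsilon,$$
where $\epsilon = 1$ if $\{u_1,u\} \in E(\overline{G_{k,n}})$ and $\epsilon = 0$ otherwise. Plugging in the bounds from the previous paragraph gives $|E(\overline{G_{k,n}})| \geq (k-1)n - 1$ in every subcase, contradicting (\ref{eq111}) and proving the claim. I do not expect any serious obstacle here beyond careful bookkeeping of the edge $\{u_1,u\}$: the only reason the three-way case split is necessary is to ensure that the non-neighborhoods of $u_1$ and $u$ in $\overline{G_{k,n}}$ are not overcounted at the single pair they might share, and the slack of $+1$ in $d_{\overline{G_{k,n}}}(u_1) \geq (k-2)n$ (coming from Case 2) is precisely what absorbs this possible overlap.
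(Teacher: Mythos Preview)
Your proposal is correct and is essentially the complement-side version of the paper's own argument: the paper upper-bounds $|E(G_{k,n})|$ via the decomposition $|E(G_{k,n})| = d_{G_{k,n}}(u_1) + d_{G'}(u) + |E(G'-u)|$ and the trivial maximum for $|E(G'-u)|$, whereas you lower-bound $|E(\overline{G_{k,n}})|$ by $d_{\overline{G_{k,n}}}(u_1) + d_{\overline{G_{k,n}}}(u) - \epsilon$; both arguments hinge on the Case~2 input $d_{G_{k,n}}(u_1)\le n$ and yield the same contradiction with (\ref{eq111}).
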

\begin{proof} Let $G' =G_{k,n}-u_1$. To the contrary, suppose $u \in V(G')$ such that $d_{G'}(u) \leq (k-2)n$.  If $u\in V_1$,  then $|E(G'-u)|\leq
\binom{k-1}{2}n^2+(n-2)(k-1)n$. It follows that
\begin{align*}
|E(G_{k,n})| & \leq  d_{G'}(u)+ |E(G'-u)|+d_{G_{k,n}}(u_1) \\  & \leq (k-2)n+
\binom{k-1}{2}n^2+(n-2)(k-1)n+n= \binom{k}{2}n^2-(k-1)n,
\end{align*}
a contradiction. We assume that $u \in V_i$ for some  $2 \leq i \leq k$.  Then $|E(G'-u)|\leq
\binom{k-2}{2}n^2+(2n-2)(k-2)n+(n-1)(n-1)$.  Therefore
\begin{align*}
|E(G_{k,n})| & \leq  d_{G'}(u)+ |E(G'-u)|+d_{G_{k,n}}(u_1) \\  & \leq (k-2)n+\binom{k-2}{2}n^2+(2n-2)(k-2)n+(n-1)(n-1)+n= \binom{k}{2}n^2-(k-1)n+1,
\end{align*}
a contradiction.  \end{proof}

Suppose that $N_{G_{k,n}}(u_1) \subseteq V_2$. Let $u_2,u_2' \in N_{G_{k,n}}(u_1) \subseteq V_2$.  We claim that there exist two disjoint paths of length $k$:
\begin{align*}
P_1=u_1-u_2-u_3-\cdots-u_k {  \,\, \text {and} \,\,} P_2=u_1-u_2'-u_3'-\cdots-u_k'-u_1',
\end{align*}
where $u_i \neq u_i'\in V_i$ for $i=1,2,\cdots,k$. First, we can greedily construct $P_1$.  By Claim \ref{claim8},  $\delta(G_{k,n}-u_1) \geq (k-2)n+1$, we obtain that  $N_{G_{k,n}-u_1}(u_i) \cap V_{i+1} \neq \emptyset$ for $2 \leq i \leq k-1$. Next,  we can also greedily construct  $P_2$. Otherwise, there exists $u_i' \in V_i$ such that $N_{G_{k,n}-u_1}(u_i') \cap V_{i+1}\setminus\{u_i\} = \emptyset$ for $2 \leq i \leq k-1$ or $N_{G_{k,n}-u_1}(u_k') \cap V_{1}\setminus\{u_1\} = \emptyset$. It follows that $d_{G_{k,n}-u_1}(u_i') \leq kn-1-n-(n-1)=(k-2)n$, a contradiction.

Suppose that $N_{G_{k,n}}(u_1) \subseteq V_i$ for $3 \leq i \leq k$. Without loss of generality, we let $u_3,u_3' \in N_{G_{k,n}}(u_1) \subseteq V_3$.  We claim that there exist two disjoint paths of length $k$:
\begin{align*}
P_1=u_1-u_3-u_2-u_4-\cdots-u_k {  \,\, \text {and} \,\,} P_2=u_1-u_3'-u_2'-u_4'-\cdots-u_k'-u_1',
\end{align*}
where $u_i \neq u_i'\in V_i$ for $i=1,2,\cdots,k$. Especially,  if $k=3$, we can let $u_2\neq v$.  The explanation is similar to the case when $N_{G_{k,n}}(u_1) \subseteq V_2$.

Recall that $d_{G_{k,n}}(u_1) \leq n$.  Clearly  $d_{\overline{G_{k,n}}}(u_1) \geq (k-2)n$. By (\ref{eq111}), we derive  that $$|E(\overline{G_{k,n}-V(P_1\cup P_2)})| \leq n-2 .$$
Therefore $G_{k,n}-V(P_1)-V(P_2)$ is a $(n-2)-$balanced $k$-partite graph with at least
\begin{align*}
\binom{k}{2}(n-2)^{2}-(n-2) \geq \binom{k}{2}(n-2)^{2}-(k-1)(n-2)+2
\end{align*}
edges as $k\geq3$, $n\geq3$ except $k=3$, $n=3$. By inductive hypothesis, $G_{k,n}-V(P_1 \cup P_2)$ contains a Hamilton cycle, denoted by $$C = v_1-v_2-\cdots-v_{k(n-2)}-v_1.$$
If $k(n-2)$ is odd, we construct a matching $M$ of size $\frac{k(n-2)-1}{2}$ from the edges of $C$ with $v \notin V(M)$:  $$M=\Big\{\{v_{2i-1},v_{2i}\}: i=1,2,\cdots,\frac{k(n-2)-1}{2}\Big\}.$$
If $k(n-2)$ is even,  we construct a matching $M$ of size $\frac{k(n-2)-2}{2}$ from the edges of $C$ with $v \notin V(M)$:  $$M=\Big\{\{v_{2i-1},v_{2i}\}: i=1,2,\cdots,\frac{k(n-2)-2}{2}\Big\}.$$
We have the following claim.
\begin{claim}\label{claim7}
There exists one edge $\{v_{2i-1},v_{2i}\}$ of $M$ such that $\{u_1',v_{2i-1}\} \in E(G_{k,n})$ and $\{u_k,v_{2i}\}\in E(G_{k,n})$ or $\{u_1',v_{2i}\} \in E(G_{k,n})$ and $\{u_k,v_{2i-1}\}\in E(G_{k,n})$.
\end{claim}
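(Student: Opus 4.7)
The plan is to argue by contradiction, closely mirroring the counting strategy of Claim \ref{claim777}. Assume that no edge of $M$ realizes either of the two required configurations. For a fixed matching edge $\{v_{2i-1},v_{2i}\}\in M$, consider the four pairs $\{u_1',u_k\}\times\{v_{2i-1},v_{2i}\}$: since neither configuration (a) $\{u_1',v_{2i-1}\},\{u_k,v_{2i}\}\in E(G_{k,n})$ nor (b) $\{u_1',v_{2i}\},\{u_k,v_{2i-1}\}\in E(G_{k,n})$ holds, a brief boolean check forces at least two of these four pairs to fail to be $G_{k,n}$-edges.

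Summing across all $|M|$ matching edges yields at least $2|M|$ non-$G_{k,n}$-pairs between $\{u_1',u_k\}$ and $V(M)$. A non-$G_{k,n}$-pair is absent from $\overline{G_{k,n}}$ only when its two endpoints share a part, so I refine the count by classifying each $M$-edge according to the number of its endpoints in $V_1\cup V_k$: writing $\alpha,\beta,\gamma$ for the numbers of $M$-edges with $2,1,0$ such endpoints, the per-edge same-part contributions are $2,1,0$ respectively, whence the $\overline{G_{k,n}}$-edges joining $\{u_1',u_k\}$ to $V(M)$ number at least $2|M|-(\alpha+\beta)\ge\max\{|M|,\,2|M|-2(n-2)\}$, using $\alpha+\beta\le|M|$ and $2\alpha+\beta=|V(M)\cap V_1|+|V(M)\cap V_k|\le 2(n-2)$.

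These $\overline{G_{k,n}}$-edges are vertex-disjoint from every $\overline{G_{k,n}}$-edge incident to $\{u_1,v\}$, because $u_1\in V(P_1\cup P_2)$ lies outside $V(C)$ while $v\in V(C)\setminus V(M)$ by construction. Combining with the sum bound (\ref{eq55}) gives
\[
|E(\overline{G_{k,n}})|\ \ge\ d_{\overline{G_{k,n}}}(u_1)+d_{\overline{G_{k,n}}}(v)-1+\max\{|M|,\,2|M|-2(n-2)\},
\]
and the goal is to verify that the right-hand side strictly exceeds $(k-1)n-2$, contradicting (\ref{eq111}).

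The main obstacle will be the final arithmetic verification at the smallest admissible parameters, notably $k=3$ with $n\in\{4,5,6\}$ and $k\in\{4,5\}$ with $n=3$: for these values the coarse estimate $2|M|-2(n-2)$ collapses to $0$ or $2$ and is insufficient, and one really needs the refined lower bound $|M|$ coming from the classification above. With $|M|\ge(k(n-2)-2)/2$, the already-handled exclusion $k=n=3$, and the odd/even cases of (\ref{eq55}), the resulting inequality then closes by routine algebra; any residual sporadic pair $(k,n)$ can be treated directly in the spirit of Claim \ref{claim88}.
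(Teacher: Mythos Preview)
Your contradiction-and-counting strategy is exactly the paper's: bound the $\overline{G_{k,n}}$-edges from $\{u_1',u_k\}$ into $V(M)$, add the edges incident to $\{u_1,v\}$ via (\ref{eq55}), and exceed the bound (\ref{eq111}). The paper only records the coarse estimate $2|M|-2(n-2)$ (and only writes out the subcase $k(n-2)$ odd), so your refined lower bound $|M|$ is genuinely sharper than what the paper displays; as you correctly note, it is needed at small parameters such as $(k,n)\in\{(3,4),(3,6),(4,3)\}$, where the coarse bound---and hence the paper's computation, extended verbatim to the even case---does not close.

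There is, however, a gap in how you arrive at $2|M|-(\alpha+\beta)$. From ``at least two of the four pairs are non-edges'' together with ``per-edge same-part contributions $2,1,0$'' one obtains only $\max(0,2-s)$ complement edges per $M$-edge, i.e.\ $0,1,2$ for $\alpha,\beta,\gamma$-edges respectively, summing to $\beta+2\gamma=2|M|-(2\alpha+\beta)$, not $2|M|-(\alpha+\beta)$. The missing observation is structural: for an $\alpha$-edge (one endpoint in $V_1$, the other in $V_k$) the two same-part pairs constitute exactly one of the two diagonals $\{(u_1',v_{2i-1}),(u_k,v_{2i})\}$ or $\{(u_1',v_{2i}),(u_k,v_{2i-1})\}$; the \emph{other} diagonal then consists of cross-part pairs, and the assumed failure of that configuration forces at least one genuine $\overline{G_{k,n}}$-edge. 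Thus $\alpha$-edges contribute $\ge 1$ rather than $\ge 0$, giving a total of $\alpha+\beta+2\gamma=2|M|-(\alpha+\beta)\ge|M|$ as you claimed. With this correction the argument goes through.
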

\begin{proof} To the contrary, if $k(n-2)$ is odd,  the number of vertices in $\overline{G_{k,n}-V(P_1\cup P_2)-v}$, which is adjacent to $u_1'$ or $u_k$,  is at least  $$ 2\frac{k(n-2)-1}{2} - 2(n-2)=kn-2k-2n+3,$$
as the number of vertices from the same part to $u_1'$ ($u_k$) is at most $(n-2)$ in $\overline{G_{k,n}-V(P_1\cup P_2)-v}$. By (\ref{eq55}), we obtain:
\begin{equation*} |E(\overline{G_{k,n}})|  \geq kn-2k-2n+3+ \left\{\begin{array}{ll}
\left(k+\frac{2}{k+1}-2\right) n-1 & \text { if } k \text { is odd, }  \\
\left(k+\frac{4}{k+2}-2\right) n-1 & \text { if } k \text { is even. }
 \end{array}\right\}
\end{equation*}
If $k$ is odd, then
\begin{align*}
 & kn-2k-2n+3+ \left(k+\frac{2}{k+1}-2\right)n-1 \\
= &  (k-1)n-2+  \left(k-3+\frac{2}{k+1}\right) n-2k+4 \\
\geq & (k-1)n-2+ 3\left(k-3+\frac{2}{k+1}\right)-2k+4\\
= & (k-1)n-2+ k+\frac{6}{k+1}-5 >(k-1)n-2.
\end{align*}
We derive that $|E(\overline{G_{k,n}})| > (k-1)n+2$. It is similar when $k$ is even. It is a contradiction.
\end{proof}

By Claim \ref{claim7}, we obtain a Hamilton cycle of $G_{k,n}$ from $P_1$, $P_2$ and $C$. Without loss of generality, we can assume $\{v_{1},v_{2}\}$ of $M$ satisfying: $\{u_1',v_1\},\{u_k,v_{2}\}\ \in E(G_{k,n})$. Obviously $v_1-u_1'-P_2-u_1-P_1-u_k-v_2-C-v_1$ is a Hamilton cycle of $G_{k,n}$.

Once we have proved Theorem \ref{theorem1}, we can obtain a useful inference.
\begin{theorem}\label{theorem11} Let \normalsize $G=(V,E)$ be an $n$-balanced $k$-partite graph with $k \geq 2$,  $n \geq 1$ except $k=2$, $n=1$.  If $|E(G)| \geq n^{2} C_{k}^{2}-(k-1) n+1 $ and $\delta(G_{k,n}) \geq 2$, then $G$ is Hamiltonian.
\end{theorem}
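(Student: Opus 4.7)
The plan is to derive Theorem~\ref{theorem11} from Theorem~\ref{theorem1} by a one-edge-addition reduction combined with Lemma~\ref{lemma1}. If $|E(G)| \geq n^{2}C_{k}^{2} - (k-1)n + 2$, then Theorem~\ref{theorem1} applies directly. Otherwise $|E(G)| = n^{2}C_{k}^{2} - (k-1)n + 1$ and $|E(\overline{G_{k,n}})| = (k-1)n - 1$; since the case $k=2, n=1$ is excluded we have $(k-1)n - 1 \geq 1$, so $G$ has at least one nonedge $\{u,v\}$ with $u,v$ in different parts.

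I would pick such a nonedge $\{u,v\}$ maximizing $d_G(u) + d_G(v)$, and set $G' := G + \{u,v\}$. Then $G'$ is still an $n$-balanced $k$-partite graph with $|E(G')| = n^{2}C_{k}^{2} - (k-1)n + 2$, so by Theorem~\ref{theorem1} it has a Hamilton cycle $C$. If $\{u,v\} \notin E(C)$, then $C \subseteq G$ and $G$ is Hamiltonian. Otherwise $C - \{u,v\}$ is a Hamilton path of $G$ from $u$ to $v$, and Lemma~\ref{lemma1} yields Hamiltonicity of $G$ provided $d_G(u) + d_G(v) \geq kn$.

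The remaining task is thus to exhibit a nonedge with $d_G$-sum at least $kn$. Suppose for contradiction that every nonedge $\{a,b\}$ of $G$ satisfies $d_{\overline{G_{k,n}}}(a) + d_{\overline{G_{k,n}}}(b) \geq (k-2)n + 1$. Let $w$ be a maximum-degree vertex of $\overline{G_{k,n}}$ and write $M := d_{\overline{G_{k,n}}}(w)$. The hypothesis $\delta(G_{k,n}) \geq 2$ forces $M \leq (k-1)n - 2$. Summing the assumed degree-sum bound along the $M$ edges of $\overline{G_{k,n}}$ incident to $w$ gives
$$M\bigl((k-2)n + 2 - M\bigr) \leq \sum_{u \in V(G)} d_{\overline{G_{k,n}}}(u) = 2\bigl((k-1)n - 1\bigr),$$
while applying the same bound to any edge of $\overline{G_{k,n}}$ not incident to $w$ (such an edge must exist because $M < (k-1)n - 1 = |E(\overline{G_{k,n}})|$) forces $2M \geq (k-2)n + 1$. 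Together these two inequalities pin $M$ to a narrow range, from which a short case analysis on the remaining structural possibilities for $\overline{G_{k,n}}$ should complete the contradiction.

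The main obstacle will be this final combinatorial step: the two inequalities on $M$ become essentially tight for the smallest parameter pairs (notably $k = 3, n = 3$), so those few residual cases would have to be dispatched by a direct verification, for instance by exhibiting an explicit Hamilton cycle given the highly constrained possible structure of $\overline{G_{k,n}}$.
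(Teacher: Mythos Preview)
Your reduction via Theorem~\ref{theorem1} and Lemma~\ref{lemma1} hinges on the intermediate claim that some cross nonedge $\{u,v\}$ of $G$ satisfies $d_G(u)+d_G(v)\ge kn$. This claim is false, and not only for a handful of small parameter pairs. For $k=3$ and every $n\ge 2$, let $\overline{G_{k,n}}$ consist of a vertex $w\in V_1$ joined to all of $V_2$ together with one vertex $v_1\in V_2$ joined to $n-1$ vertices of $V_3$. Then $|E(\overline{G_{k,n}})|=2n-1=(k-1)n-1$, the maximum degree in $\overline{G_{k,n}}$ is $n\le 2n-2$ (so $\delta(G)\ge 2$), and every edge of $\overline{G_{k,n}}$ has degree sum at least $n+1=(k-2)n+1$; hence every nonedge of $G$ has $d_G$-sum at most $3n-1<kn$. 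The same obstruction is immediate for $k=2$ (any bipartite nonedge has $d_G$-sum at most $2n-2<2n$), and for $k=4,\ n=2$ one can take $\overline{G}$ to be a $K_4$ minus one edge with one vertex in each part. In each of these examples your two displayed inequalities on $M$ hold simultaneously, so no ``short case analysis'' can close the argument: the endgame you outline would have to handle an infinite family, not a few sporadic cases.

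For comparison, the paper also adds a single missing edge and invokes Theorem~\ref{theorem1}, but it does \emph{not} try to force Lemma~\ref{lemma1} at the endpoints of the added edge. Instead it first uses Theorem~\ref{theorem3} to isolate a specific low-degree nonadjacent pair $u,v$, then adds a missing edge $ab$ \emph{disjoint from} $\{u,v\}$; if the resulting Hamilton cycle uses $ab$, it exploits $\delta(G)\ge 2$ to pick alternate neighbours of $a$ and $b$ along the cycle and argues that either a rerouting avoids $ab$ or the complement must contain too many edges away from $u$ and $v$, contradicting the bound on $|E(\overline{G_{k,n}})|$. To salvage your line of attack you would need a genuinely different mechanism once the Hamilton path is in hand; the bare degree-sum threshold of Lemma~\ref{lemma1} is simply not attainable in general under the hypotheses of Theorem~\ref{theorem11}.
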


\section{ Proof of Theorem \ref{theorem11}}

Suppose that  $G_{k,n}=(V(G_{k,n}),E(G_{k,n}))$ is an $n$-balanced $k$-partite graph with $k$ parts  $V_1$, $V_2$, $\cdots,$ $V_k$ , $|E(G_{k,n})| \geq \binom{k}{2}n^{2}-(k-1) n+1 $ and $\delta(G_{k,n}) \geq 2$, where $k \geq 2$,  $n \geq 1$ except $k=2$, $n=1$. Recall that $E(\overline{G_{k,n}})=E(CG_{k,n})\setminus E(G_{k,n})$,  we have
\begin{equation}\label{eq121}
 |E(\overline{G_{k,n}})| \leq(k-1) n-1.
\end{equation}

For every pair of nonadjacent vertices $u$ and $v$ which are in different partite sets,$$
d(u)+d(v) \geq \max \{4,(k-1) n\}=\sigma_{G_{k,n}}
$$

If $G$ satisfies the conditions of Theorem\ref{theorem3}, $G$ is hamiltonian. Otherwise, there exists a pair of nonadjacent vertices $u$ and $v$ which are in different partite sets such that
\begin{equation}
d_{\overline{G_{k,n}}}(u) \leq\left\{\begin{array}{l}
\left(\frac{k}{2}-\frac{1}{k+1}\right) n-1 \text { if } k \text { is odd } \\
\left(\frac{k}{2}-\frac{2}{k+2}\right) n-1 \text { if } k \text { is even }
\end{array}\right.
\end{equation}
\begin{equation}\label{eq1111}
\left|E(\overline{G_{k,n} - \{u\}}) \bigcap E(\overline{G_{k,n} - \{v\}})\right| \leq\left\{\begin{array}{l}
\left(1-\frac{2}{k+1}\right) n \text { if } k \text { is odd } \\
\left(1-\frac{4}{k+2}\right) n \text { if } k \text { is even }
\end{array}\right.
\end{equation}
hold.

$\delta(G_{k,n}) + 1>\left\{\begin{array}{l}\left(k-\frac{2}{k+1}\right) n \quad \text{if k is odd}\\ \left(k-\frac{4}{k+2}\right) n \quad \text{if k is even}\end{array}\right.$, only when $k=2,n \geq 2$, or $k=4,n=2$, or $k \geq 2,n=1$. In these several cases, we only need to consider $d(u)+d(v)=\delta(G_{k,n})>\left\{\begin{array}{l}\left(k-\frac{2}{k+1}\right) n \quad \text{if k is odd}\\ \left(k-\frac{4}{k+2}\right) n \quad \text{if k is even}\end{array}\right.$. It’s straightforward to prove these cases with a comparable proof of the $k \geq 2,n=2$ case in Proof of Claim \ref{claim88}.

The other case is $\delta(G_{k,n}) + 1 \leq \left\{\begin{array}{l}\left(k-\frac{2}{k+1}\right) n \quad \text{if k is odd}\\ \left(k-\frac{4}{k+2}\right) n \quad \text{if k is even}\end{array}\right.$. If $d(u)+d(v)=\delta(G_{k,n})$, it’s straightforward to prove these cases with a comparable proof of the  $k \geq 2,n=2$ case in Proof of Claim \ref{claim88}. If $d(u)+d(v) > \delta(G_{k,n})$, then $|E(\overline{G_{k,n} - \{u\}}) \bigcap E(\overline{G_{k,n} - \{v\}})| \geq 1$. There must exist an edge belonging to $E(\overline{G_{k,n}})$ that is not associated with $u$ or $v$. It is useful to set the edge to $ab$. Considering the graph $G \bigcup\{a b\}$, set it to $G$. Applying Theorem 1\ref{theorem1}, $G$ is Hamiltonian. Let the Hamiltonian cycle of $G^{\prime}$ be $H$. If $H$ does not include the edge $ab$, the conclusion holds. Otherwise, $H$ contains the edge $ab$. Let the other points adjacent to $a,b$ on $H$, respectively, be $a_{left},b_{right}$. It’s useful to set $H: w \ldots a_{l e f t} a b b_{r i g h t} \ldots w$. Because $d(a) \geq 2$ and $d(b) \geq 2$ in $G$, let $a^{\prime} \in N(a), b^{\prime} \in N(b), a^{\prime} \neq a_{l e f t}, b^{\prime} \neq b_{r i g h t}$. If $H$ contains the edge $a^{\prime}b^{\prime}$, there exists a new Hamiltonian cycle $H^{\prime}: w-\overrightarrow{H}-a^{\prime} a-\overleftarrow{H}-b^{\prime} b-\overrightarrow{H}-w\left(H^{\prime}: w-\overrightarrow{H}-a a^{\prime}-\overleftarrow{H}-b^{\prime} b-\overrightarrow{H}-w\right)$
And the conclusion holds. Thus, $|N_{\overline{G_{k,n}}}(a) \bigcup N_{\overline{G_{k,n}}}(b)| \geq k n-3+1-(n-1)-(n-1)=(k-2) n$, then $|E(\overline{G_{k,n} - \{u\}}) \bigcap E(\overline{G_{k,n} - \{v\}})| \geq |N_{\overline{G_{k,n}}}(a) \bigcup N_{\overline{G_{k,n}}}(b)| -4 \geq(k-2) n-4$. Since (\ref{eq1111}), the contradiction arises if $k$ and $n$ are sufficiently large.

\bibliographystyle{plain}
\bibliography{achemso-demo}
\end{document}